\newtheorem{thm}{Theorem}
\newtheorem{defn}[thm]{Definition}
\newtheorem{exl}[thm]{Example}
\newtheorem{rem}[thm]{Remark}
\newtheorem{lemma}[thm]{Lemma}
\newcommand{\Eqref}[1]{Eq.\,(\ref{#1})}
\newcommand{\ep}{\epsilon}
\newcommand{\N}{\mathbb{N}}
\newcommand{\R}{\mathbb{R}}
\newcommand{\E}{\mathbb{E}}
\newcommand{\G}{\mathscr{G}}
\newenvironment{proof}{\vspace{-2pt}\noindent\textbf{Proof:}}{$\Box$\\\medskip}
\begin{document}
\title{Approximations of expectations under infinite product measures\thanks{Solan acknowledges the support of the Israel Science Foundation, grant \#211/22. We are grateful to John (Yehuda) Levy for a stimulating discussion.}}

\author{Galit Ashkenazi-Golan\footnote{London School of Economics and Political Science, Houghton Street, London WC2A 2AE, UK, E-mail: galit.ashkenazi@gmail.com.}\and 
J\'{a}nos Flesch\footnote{Department of Quantitative Economics, 
Maastricht University, P.O.Box 616, 6200 MD, The Netherlands. E-mail: j.flesch@maastrichtuniversity.nl.}
\and Arkadi Predtetchinski\footnote{Department of Economics, Maastricht University, P.O.Box 616, 6200 MD, 
The Netherlands. E-mail: a.predtetchinski@maastrichtuniversity.nl.}\and 
Eilon Solan\footnote{School of Mathematical Sciences, Tel-Aviv University, Tel-Aviv, Israel, 6997800, E-mail: eilons@tauex.tau.ac.il.}}
\maketitle

\begin{abstract}
\noindent We are given a bounded Borel-measurable real-valued function on a product of countably many Polish spaces, and a product probability measure. We are interested in points in the product space that can be used to approximate the expected value of this function. We define two notions. A point is called a \emph{weak $\ep$-approximation}, where $\ep\geq 0$, if the Dirac measure on this point, except in finitely many coordinates where another measure can be taken, gives an expected value that is $\ep$-close to the original expected value. A point is called a \emph{strong $\ep$-approximation} if the same holds under the restriction that in those finitely many coordinates the measure is equal to the original one. We prove that both the set of weak 0-approximation points and the set of strong $\ep$-approximation points, for any $\ep>0$, have measure 1 under the original measure. Finally, we provide two applications: 
(i) in Game Theory on the minmax guarantee levels of the players in games with infinitely many players, and
(ii) in Decision Theory on the set of feasible expected payoffs in infinite duration problems.
\end{abstract}

\noindent
\textbf{Keywords:} Infinite product space, product measure, universally measurable set, strategic-form game, minmax value.\smallskip

\noindent
\textbf{MSC 2020 Classification:} 
28A35, 28C20,
91A07.

\section{Introduction}

\noindent\textbf{Approximations of expected payoffs.} Suppose that $\sigma = \otimes_{i \in \N} \sigma_i$ is a product probability measure on a product space $X = \times_{i \in \N} X_i$, where each $X_i$ is a Polish space,
and $f:X \to \R$ is a bounded and Borel-measurable function.
In this paper, 
we ask whether $\E_\sigma[f]$ can be approximated by an expectation $\E_\tau[f]$, 
for some ``simple'' product measure $\tau$. 

If $f$ is a tail function, then there necessarily exists $x \in X$ such that $f(x) = \E_\sigma[f]$; in fact, almost every (under $\sigma$) point of $X$ has the stated property. In general, no such point $x$ need to exist.

We are interested in two types of approximating measures.
We say that $x = (x_i)_{i \in \N}\in X$ is a \emph{weak $\ep$-approximation} of the expectation of $f$ under $\sigma$ if there is some product probability measure $\tau = \otimes_{i \in \N} \tau_i$,
such that (i) $| \E_\tau[f]-\E_\sigma[f]| \leq \ep$,
and (ii)
$\tau_i$ is the Dirac measure on $x_i$ for all but finitely many $i$'s.
We will say that 
$x\in X$ is a \emph{strong $\ep$-approximation} of the expectation of $f$ under $\sigma$ if the same holds, where in addition (iii) $\tau_i$ is either $\sigma_i$ or a Dirac measure, for every $i \in \N$.

Our main results are that (i) 
the set of weak 0-approximations have $\sigma$-measure 1 (cf. Theorem \ref{thm.exact}), and (ii) the set of strong $\ep$-approximations have $\sigma$-measure 1 for every $\ep>0$ (cf. Theorem \ref{thm.approx}). 
As an example demonstrates (cf. Example \ref{exno0}), a strong 0-approximation does not always exist.

\smallskip

\noindent\textbf{Application in Game Theory.} The first motivation for our study comes from Game Theory, and more specifically, from the study of strategic-form games with countably many players. Games with countably many players have been studied, e.g., by Peleg \cite{Peleg}, Voorneveld \cite{Voorneveld}, Rachmilevitch \cite{Rachmilevitch}, and Ashkenazi-Golan, Flesch, Predtetchinski and Solan \cite{Ashkenazi-etal}. These studies have mainly revolved around the question of (non)existence of Nash ($\ep$-)equilibrium. In contrast, here we focus on the notion of the minmax value.

The \textit{minmax value} of a player is one of the central concepts in game theory. It is defined as the maximal payoff that the player can guarantee to obtain, when all other players try to decrease her payoff. The underlying assumption on the behaviour of the punishing players is that they cannot correlate their lotteries, and thus their joint strategy is a product measure.

Ashkenazi-Golan, Flesch, Predtetchinski and Solan \cite{Ashkenazi-etal} introduced a notion of the \emph{finitistic minmax value}. This is a version of the minmax where only finitely many of the punishing players are allowed to randomize; the others are required to choose a Dirac measure. It is clear that the finitistic minmax value is not lower than the minmax value. Whether the two values are in fact equal is an open problem. Here we answer this question in the affirmative (cf. Theorem \ref{minmax-oneshot}). This is achieved using the result (ii) on strong $\ep$-approximations.\medskip 

\noindent\textbf{Application in Decision Theory.}  
The second motivation comes from decision theory. In dynamic decision problems on the infinite horizon, \emph{Markov strategies} play an important role (e.g., Hill \cite{Hill1979}, Kiefer, Mayr, Shirmohammadi, Totzke \cite{Kieferetal2019}, Sudderth \cite{Sudderth2020}). These are strategies in which, at each stage, the choice of action may depend on the current stage and the current state, but not on past events. 
Such strategies are desirable because of their simplicity,
as the decision maker's behavior depends only on the current payoff relevant parameters.
A strategy is called \emph{eventually pure} if after some stage it does not use randomization (cf. Flesch,  Herings, Maes, Predtetchinski \cite{Fleschetal2022}). 

Using our result (i) on weak 0-approximations, we show that in a class of decision problems on the infinite horizon, the set of expected payoffs that the decision maker can obtain by Markov strategies is equal to the set of expected payoffs under eventually pure Markov strategies. In other words, using Markov strategies that involve a randomization at infinitely many stages does not generate additional expected payoffs.\smallskip

\noindent\textbf{Structure of the paper.} 
Definitions appear in Section~\ref{section:definitions},
the main results and their proofs are provided in Section~\ref{section:results},
and the applications to Game Theory and Decision Theory are discussed in Section~\ref{section:application}. A few concluding remarks are given, including an open problem, in Section \ref{ConcRem}.

\section{Approximations of expectations}
\label{section:definitions}

Let $\N = \{1,2,\ldots\}$. For each $i \in \N$, let $X_{i}$ be a Polish space. Let $X = \times_{i \in \N} X_{i}$ denote the product space, which we endow with the product topology. 

For each $i \in \N$, let $\Sigma_{i}$ be the set of Borel probability measures on $X_{i}$, and let $\Sigma$ be the set of all product Borel probability measures on $X$, i.e., the measures $\sigma = \otimes_{i \in \N} \sigma_{i}$ where $\sigma_{i} \in \Sigma_{i}$ for each $i\in\N$. With a slight abuse of notation, we write $x_{i}$ to denote both a point of $X_{i}$ and the Dirac measure concentrated on $x_{i}$, and we use the same symbol $\sigma$ to denote both a measure in $\Sigma$ and its unique extension to the sigma-algebra of all universally measurable sets of $X$. 

We define two notions for the approximations of expectations under the measures in $\Sigma$. 
\begin{defn}
Let $\epsilon \geq 0$. For a product measure $\sigma \in \Sigma$ and a bounded Borel-measurable function $f : X \to \R$, a point $x \in X$ is said to be a \emph{weak $\epsilon$-approximation of the expectation of $f$ under $\sigma$} if there is $n \in \N$ and a Borel probability measure $\tau_{i}$ on $X_{i}$ for each $i<n$, such that: 
\begin{equation}\label{eqn.1player}
\big|\E_{\tau_{1} \otimes \cdots \otimes \tau_{n-1} \otimes x_{n} \otimes x_{n+1} \otimes \cdots}[f] - \E_{\sigma}[f]\big| \,\leq\, \epsilon.
\end{equation}  
\end{defn}

Intuitively, \Eqref{eqn.1player} means that the expectation changes by at most $\ep$ if we replace $\sigma$ by the measure that is the Dirac measure on $x_i$ in each coordinate $i\geq n$ and $\tau_i$ in each coordinate~$i<n$. 

\begin{defn}
Let $\epsilon \geq 0$. For a product measure $\sigma \in \Sigma$ and a bounded Borel-measurable function $f : X \to \R$, a point $x \in X$ is said to be a \emph{strong $\epsilon$-approximation of the expectation of $f$ under $\sigma$} if there is $n \in \N$ such that: 
\begin{equation}\label{eqn.apprx}
\big|\E_{\sigma_{1} \otimes \cdots \otimes \sigma_{n-1} \otimes x_{n} \otimes x_{n+1} \otimes \cdots}[f] - \E_{\sigma}[f]\big| \,\leq\, \epsilon.
\end{equation}
\end{defn}

Thus, \Eqref{eqn.apprx} means that the expectation changes by at most $\ep$ if we replace $\sigma$ by the Dirac measure on $x_i$ in each coordinate $i\geq n$. Obviously, if $x \in X$ is a strong $\epsilon$-approximation of the expectation of $f$ under $\sigma$, then $x$ is also a weak $\epsilon$-approximation.

Let $W_{\epsilon}(\sigma,f) \subseteq X$ and $S_{\epsilon}(\sigma,f) \subseteq X$ denote the sets of weak and respectively strong $\epsilon$-approximations of the expectation of $f$ under $\sigma$. 
Thus, $W_{\epsilon}(\sigma,f) \subseteq S_{\epsilon}(\sigma,f)$. It follows from the following lemma that both $W_{\epsilon}(\sigma,f)$ and $S_{\epsilon}(\sigma,f)$ are universally measurable, and in particular, both sets are measurable with respect to the measure $\sigma$.

\begin{lemma}\label{lemAn}
For each $\ep\geq 0$, each product measure $\sigma\in \Sigma$, and each bounded Borel-measurable function $f : X \to \R$, the set $W_{\epsilon}(\sigma,f) \subseteq X$ is analytic and the set $S_{\epsilon}(\sigma,f) \subseteq X$ is Borel.
\end{lemma}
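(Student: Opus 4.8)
The plan is to reduce both statements to the Borel measurability of parametrized integrals of $f$, and then to read off the descriptive complexity from the logical form of the two definitions. Throughout, fix $c := \E_\sigma[f] \in \R$, which is finite since $f$ is bounded. For $n \in \N$ write $Z_n := X_1 \times \cdots \times X_{n-1}$ and $X^{\geq n} := \times_{i \geq n} X_i$, and identify $X$ with $Z_n \times X^{\geq n}$; under this identification $f$ becomes a bounded Borel function of the pair $(z,w) \in Z_n \times X^{\geq n}$.

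The key technical ingredient, which I would establish by a functional monotone class argument, is the following standard fact: if $Z$ and $W$ are Polish and $F : Z \times W \to \R$ is bounded and Borel, then the map
$$\Phi(\mu, w) \;:=\; \int_Z F(z,w)\, d\mu(z)$$
is Borel on $\mathcal{P}(Z) \times W$, where $\mathcal{P}(Z)$ is the set of Borel probability measures on $Z$ equipped with the topology of weak convergence (which makes it Polish, as $Z$ is Polish). Indeed, for $F$ of product form $F(z,w)=g(z)h(w)$ with $g,h$ bounded continuous, $\Phi(\mu,w)=h(w)\int_Z g\,d\mu$ is continuous, since $\mu \mapsto \int_Z g\,d\mu$ is continuous for the weak topology; such products form a multiplicative class generating the Borel $\sigma$-algebra of $Z\times W$ (as $Z$ and $W$ are second countable, the product Borel $\sigma$-algebra coincides with the Borel $\sigma$-algebra of the product), while the family of bounded Borel $F$ for which $\Phi$ is Borel is a vector space containing the constants and closed under bounded pointwise monotone limits. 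The functional monotone class theorem then gives the claim for all bounded Borel $F$.

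For the strong case I fix $n$ and apply this fact with the fixed measure $\mu = \sigma_1 \otimes \cdots \otimes \sigma_{n-1} \in \mathcal{P}(Z_n)$, so that
$$g_n(x) \;:=\; \E_{\sigma_1 \otimes \cdots \otimes \sigma_{n-1} \otimes x_n \otimes x_{n+1} \otimes \cdots}[f] \;=\; \int_{Z_n} f(z, x^{\geq n})\, d\mu(z)$$
is a Borel function of $x$. Hence the level set $S^n_\ep := \{x \in X : |g_n(x) - c| \leq \ep\}$ is Borel, and since a point is a strong $\ep$-approximation precisely when it lies in $S^n_\ep$ for some $n$, we obtain $S_\ep(\sigma,f) = \bigcup_{n \in \N} S^n_\ep$, a countable union of Borel sets, hence Borel.

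For the weak case I again fix $n$, but now let the measures on the first $n-1$ coordinates vary. Using that the product map $(\tau_1,\ldots,\tau_{n-1}) \mapsto \tau_1 \otimes \cdots \otimes \tau_{n-1}$ from $\Sigma_1 \times \cdots \times \Sigma_{n-1}$ into $\mathcal{P}(Z_n)$ is continuous, the measurability fact shows that $h_n(\tau_1,\ldots,\tau_{n-1},x) := \E_{\tau_1 \otimes \cdots \otimes \tau_{n-1} \otimes x_n \otimes x_{n+1} \otimes \cdots}[f]$ is a Borel function on the Polish space $\Sigma_1 \times \cdots \times \Sigma_{n-1} \times X$. Therefore $A_n := \{(\tau_1,\ldots,\tau_{n-1},x) : |h_n(\tau_1,\ldots,\tau_{n-1},x) - c| \leq \ep\}$ is Borel, and $W^n_\ep$ is exactly its projection onto the $X$-coordinate. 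Since the projection of a Borel subset of a product of Polish spaces is analytic, each $W^n_\ep$ is analytic, and $W_\ep(\sigma,f) = \bigcup_{n \in \N} W^n_\ep$ is a countable union of analytic sets, hence analytic. The main obstacle is the joint Borel measurability of $h_n$ in the measures and the point; once the monotone class argument is in place, the two classifications follow from the logical forms ``$\exists n$'' (a countable union) and ``$\exists (\tau_1,\ldots,\tau_{n-1})$'' (a projection), respectively.
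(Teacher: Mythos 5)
Your proposal is correct and takes essentially the same approach as the paper: joint Borel measurability of the parametrized expectation, Borel level sets, a countable union over $n$ for $S_{\ep}(\sigma,f)$, and a projection of a Borel set (hence analytic) followed by a countable union for $W_{\ep}(\sigma,f)$. The only difference is that where the paper cites Aliprantis and Border (Theorem 15.13) for the key measurability fact, you prove it yourself via the functional monotone class theorem, which is a valid self-contained substitute.
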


\begin{proof}
For each $n\in\N$, we equip $\Sigma_{n}$ with the topology of weak convergence, and $\Sigma^n:=\times_{i=1}^n\Sigma_{i}$ with the product topology. Note that $\Sigma^n$ is a Polish space. 
For each $n \in \N$, the function $X \times \Sigma^n \to \R$ given by $(x,\tau_{1},\ldots,\tau_{n}) \mapsto \E_{\tau_{1} \otimes \cdots \otimes \tau_{n-1} \otimes \tau_{n} \otimes x_{n+1} \otimes x_{n+2} \otimes \cdots}[f]$ is Borel-measurable (Aliprantis and Border \cite[Theorem 15.13]{AliBo06}). Hence the set
$B^{n} \subseteq X \times \Sigma^{n}$ consisting of the points $(x,\tau_{1},\ldots,\tau_{n})$ satisfying \Eqref{eqn.1player} is a Borel set. Let $A^{n}$ be the projection of $B^{n}$ onto the first coordinate. Then $A^{n}$ is analytic. Since $W_{\epsilon}(\sigma,f) = \cup_{n \in \N} A^{n}$, the result follows.

The fact that $S_{\epsilon}(\sigma,f)$ is a Borel set follows since the map $X \to \R$ given by $x \mapsto \linebreak \E_{\sigma_{1} \otimes \cdots \otimes \sigma_{n-1} \otimes x_{n} \otimes x_{n+1} \otimes \cdots}[f]$ is Borel-measurable for each $n\in\N$ (Aliprantis and Border \cite[Theorem 15.13]{AliBo06}).
\end{proof}

\section{Results}
\label{section:results}

\begin{thm}\label{thm.exact}
For each product measure  $\sigma \in \Sigma$ and each bounded Borel-measurable function $f : X \to \R$, the set $W_{0}(\sigma,f)$ has $\sigma$-measure $1$. 
\end{thm}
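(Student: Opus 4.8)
The plan is to exploit two facts: that for fixed $x$ the set of expectations obtainable by modifying finitely many coordinates is an interval, and that the extreme values of $f$ along the ``finite-modification orbit'' of $x$ are tail-measurable, hence $\sigma$-almost surely constant.

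First I would fix $x$ and, for each $n$, study the map $(\tau_{1},\dots,\tau_{n-1})\mapsto \E_{\tau_{1}\otimes\cdots\otimes\tau_{n-1}\otimes x_{n}\otimes x_{n+1}\otimes\cdots}[f]$, whose range I call $V_{n}(x)$. This map is affine in each $\tau_{i}$ separately, so along any segment $\tau^{(t)}=\big((1-t)\tau_{i}+t\tau_{i}'\big)_{i}$ it is a polynomial in $t$, hence continuous; by the intermediate value theorem $V_{n}(x)$ is an interval (note that mere weak-continuity in $\tau$ may fail, since $f$ is only Borel, which is why the polynomial-path argument is used). Averaging a function against product measures can neither exceed $\overline{f}_{n}(x):=\sup_{y_{1},\dots,y_{n-1}}f(y_{1},\dots,y_{n-1},x_{n},x_{n+1},\dots)$ nor fall below $\underline{f}_{n}(x):=\inf_{y_{1},\dots,y_{n-1}}f(\cdots)$, and both bounds are approached by concentrating the $\tau_{i}$ near the relevant near-optimizers; thus $V_{n}(x)$ is an interval with infimum $\underline{f}_{n}(x)$ and supremum $\overline{f}_{n}(x)$, so $V_{n}(x)\supseteq(\underline{f}_{n}(x),\overline{f}_{n}(x))$. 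Restricting $\tau_{n}$ to the Dirac measure $x_{n}$ exhibits the family defining $V_{n}(x)$ as a subfamily of that defining $V_{n+1}(x)$, so the intervals are nested, and $x\in W_{0}(\sigma,f)$ as soon as $\E_{\sigma}[f]\in\bigcup_{n}V_{n}(x)$.

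Next I would pass to $\overline{f}_{\infty}:=\sup_{n}\overline{f}_{n}$ and $\underline{f}_{\infty}:=\inf_{n}\underline{f}_{n}$. Since $\overline{f}_{n}$ is nondecreasing and $\underline{f}_{n}$ nonincreasing in $n$, one has $\bigcup_{n}(\underline{f}_{n}(x),\overline{f}_{n}(x))=(\underline{f}_{\infty}(x),\overline{f}_{\infty}(x))$. The key structural point is that $\overline{f}_{\infty}$ and $\underline{f}_{\infty}$ are invariant under changing finitely many coordinates of $x$ (for $n$ large the supremum and infimum already range over the modified coordinates), so they are measurable with respect to the $\sigma$-completed tail $\sigma$-algebra; by Kolmogorov's zero--one law they are $\sigma$-almost surely equal to constants $\underline{a}\le\overline{a}$. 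Measurability is the one technical nuisance: $\overline{f}_{n}$ is a supremum over an uncountable family, but exactly as in the proof of Lemma~\ref{lemAn} the set $\{\overline{f}_{n}>a\}$ is the projection of a Borel subset of $X\times\Sigma^{n-1}$, hence analytic, so $\overline{f}_{\infty}$ is universally measurable and the zero--one law applies to its completion.

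Finally I would run the case analysis, using the pointwise bounds $\underline{f}_{\infty}(x)\le f(x)\le\overline{f}_{\infty}(x)$ (valid because $x$ lies in its own finite-modification orbit) to locate $c:=\E_{\sigma}[f]$; taking expectations gives $\underline{a}\le c\le\overline{a}$. If $\underline{a}<c<\overline{a}$, then for $\sigma$-almost every $x$ we have $c\in(\underline{f}_{\infty}(x),\overline{f}_{\infty}(x))=\bigcup_{n}(\underline{f}_{n}(x),\overline{f}_{n}(x))\subseteq\bigcup_{n}V_{n}(x)$, so $x\in W_{0}(\sigma,f)$. If instead $c=\overline{a}$, then $f\le\overline{f}_{\infty}=\overline{a}=c$ $\sigma$-a.s.\ while $\E_{\sigma}[f]=c$, which forces $f=c$ $\sigma$-a.s.; then almost every $x$ satisfies $f(x)=c$ and is trivially a weak $0$-approximation. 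The case $c=\underline{a}$ is symmetric. In every case $W_{0}(\sigma,f)$ has $\sigma$-measure $1$. The main obstacle, and the reason exact (weak $0$-) approximation rather than merely weak $\ep$-approximation is attainable, is precisely this boundary case: connectedness alone only places $c$ in the \emph{closure} of the achievable range, and it is the zero--one law combined with the identity $\E_{\sigma}[f]=c$ that upgrades ``$c$ lies on the boundary'' to ``$f\equiv c$ almost surely'', thereby closing the gap.
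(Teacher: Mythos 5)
Your proof is correct, and although it rests on the same two pillars as the paper's argument --- the zero--one law applied to tail-invariant, universally measurable objects (Lemma \ref{lemKolg}), and a connectedness property of the values achievable on the finite-modification orbit of a point --- the execution is genuinely different. The paper partitions $X$ by tail-equivalence classes into three tail sets $Z_{-}, Z_{0}, Z_{+}$ according to whether the convex hull $I_{Z}$ of the \emph{attained} values $f(Z)$ lies below $r=\E_{\sigma}[f]$, contains $r$, or lies above it; it rules out $\sigma(Z_{-})=1$ and $\sigma(Z_{+})=1$ by the strict-inequality-in-expectation argument, and obtains exactness on $Z_{0}$ by a discrete interpolation: walking from $x$ to $y$ (where $f(x)\leq r\leq f(y)$) one coordinate at a time and mixing two points in the single straddled coordinate. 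You instead work with the achievable-expectation intervals $V_{n}(x)$ and the envelope functions $\overline{f}_{\infty},\underline{f}_{\infty}$, replacing the discrete walk by an intermediate-value argument along segments of measures (you rightly flag that weak continuity of $\mu\mapsto\E_{\mu}[f]$ fails for Borel $f$, so the multi-affine polynomial-path trick is genuinely needed); and since the open interval $\big(\underline{f}_{\infty}(x),\overline{f}_{\infty}(x)\big)$ misses its endpoints, you must patch the boundary case $c\in\{\underline{a},\overline{a}\}$ via ``$f\leq c$ a.s.\ and $\E_{\sigma}[f]=c$ forces $f=c$ a.s.'' --- which is exactly the expectation argument the paper uses to exclude $Z_{-}$ and $Z_{+}$, relocated to a different spot in the proof. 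What the paper's route buys is the stronger conclusion recorded in the Remark following Theorem \ref{thm.exact} (a two-point randomization in a \emph{single} coordinate suffices) and a trichotomy in which no boundary case ever arises, because $I_{Z}$ is a hull of attained values rather than a closure; what your route buys is a quantitative description of the achievable set --- the endpoints $\underline{a},\overline{a}$ are identified as tail-constant envelopes --- plus the structural observation that exactness can only be ``at the boundary'' when $f$ is $\sigma$-a.s.\ constant. Your measurability bookkeeping (analyticity of $\{\overline{f}_{n}>a\}$ as a projection of a Borel set, hence universal measurability of the envelopes, hence applicability of the zero--one law) parallels the infrastructure of Lemmas \ref{lemAn} and \ref{lemKolg} and is sound.
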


\begin{proof}
Fix a product measure  $\sigma \in \Sigma$ and a bounded Borel-measurable function $f : X \to \R$. 

Let $\sim$ be the following tail equivalence relation: for $x,y \in X$ let $x \sim y$ if there is some $n \in \N$ such that $x_{i} = y_{i}$ for all $i \geq n$. Let $\mathscr{Z}$ denote the collection of all equivalence classes of $\sim$. 
For $Z \in \mathscr{Z}$, let $I_{Z}$ be the convex hull of $f(Z)=\{f(x):x \in Z\}$; in particular, $I_{Z}$ is an interval, with or without its endpoints. 

Let $r = \E_{\sigma}[f]$. Define 
\begin{align*}
Z_{-} &= \bigcup\{Z \in \mathscr{Z}: I_{Z} \subseteq (-\infty,r)\},\\
Z_{0}\ &= \bigcup\{Z \in \mathscr{Z}: I_{Z} \ni r\},\\
Z_{+} &= \bigcup\{Z \in \mathscr{Z}: I_{Z} \subseteq (r, +\infty)\}.
\end{align*}

The statement of Theorem \ref{thm.exact} follows from Claims~1 and 2 below.\smallskip

\noindent\textsc{Claim 0:} $Z_{-}$ and $Z_{+}$ are co-analytic subsets of $X$.

\smallskip

It follows from Claim~0 that $Z_{0}$ is analytic,
and hence $Z_-$, $Z_0$, and $Z_+$ are measurable with respect to $\sigma$. \smallskip

\noindent\textsc{Proof of claim 0:} Let us write $\sim$ to denote the set $\{(x,y) \in X \times X: x \sim y\}$.
Then $\sim$ is a Borel subset of $X \times X$, as can be seen by rewriting it as
\[\bigcup_{n \in \N} \bigcap_{i \geq n}\{(x,y) \in X \times X: x_{i} = y_{i}\}.\] 
The set $X \setminus Z_{-}$ is a projection of the Borel set 
\[\sim \bigcap \{(x,y) \in X \times X: f(y) \geq r\}\] 
onto its first coordinate, and therefore analytic.
The argument that $X \setminus Z_{+}$ is analytic is similar. \medskip 

\noindent\textsc{Claim 1:} $Z_{0} \subseteq W_{0}(\sigma,f)$.\smallskip

\noindent\textsc{Proof of Claim 1:} Take any $Z \in Z_0$. We show that $Z\subseteq W_{0}(\sigma,f)$.

As $Z \in Z_0$, we have $r \in I_{Z}$, and there exist points $x,y \in Z$ such that $f(x) \leq r \leq f(y)$. Let $n \in \N$ be such that $x_{i} = y_{i}$ for each $i > n$.

For each $k=1,\ldots,n+1$, let $z_k=(y_1,\ldots,y_{k-1},x_k,x_{k+1},\ldots)$. In particular, $z_1=x$ and $z_{n+1}=y$, and $z_k\in Z$ for each $k=1,\ldots,n+1$. Because $z_1=x$ and $z_{n+1}=y$, the interval $[f(x),f(y)]$ is covered by the union of the intervals $[f(z_k),f(z_{k+1})]$ with $k=1,\ldots,n$. Consequently, there is $k\in\{1,\ldots,n\}$ such that $r\in [f(z_k),f(z_{k+1})]$. Note that $z_k$ and $z_{k+1}$ only differ in coordinate $k$.

Let $\alpha\in[0,1]$ be such that $\alpha\cdot f(z_k)+(1-\alpha)f(z_{k+1})=r$. Define the measure $\tau_k$ on $X_k$ as follows: $\tau_k$ places probability $\alpha$ on $x_k$ and probability $1-\alpha$ on $y_k$. Then, \[\E_{y_{1} \otimes \cdots \otimes y_{k-1} \otimes \tau_{k} \otimes x_{k+1} \otimes x_{k+2} \otimes \cdots}[f]\,=\,\alpha\cdot f(z_k)+(1-\alpha)f(z_{k+1})\,=\,r\,=\,\E_\sigma[f].\]
This means that $z_k=(y_1,\ldots,y_{k-1},x_k,x_{k+1},\ldots)$ is a weak $0$-approximation of the expectation of $f$ under $\sigma$,
so that $z_k\in W_{0}(\sigma,f)$. 
Hence, so is each point in the equivalence class $Z$ of $z_k$.
\medskip

\noindent\textsc{Claim 2:} $Z_{0}$ has $\sigma$-measure 1.\smallskip

\noindent\textsc{Proof of Claim 2:} The sets $Z_{-}$, $Z_{0}$, and $Z_{+}$ are tail. Since $\sigma$ is a product measure, the $\sigma$-measure of each set is either $0$ or $1$ (see Lemma \ref{lemKolg} in the Appendix). 
Since $Z_{-}$, $Z_{0}$, and $Z_{+}$ partition $X$, exactly one of them has $\sigma$-measure 1. It cannot be the case that $\sigma(Z_{-}) = 1$, since then, as $Z_{-} \subseteq f^{-1}(-\infty,r)$, we would obtain $\E_{\sigma}[f] < r$, a contradiction. One similarly excludes the possibility that $\sigma(Z_{+}) = 1$. Hence $\sigma(Z_{0}) = 1$.
\end{proof}

\begin{rem}\rm The proof of Claim 1 in the proof of Theorem \ref{thm.exact} reveals the following statement: For each product measure $\sigma\in\Sigma$ and each bounded Borel-measurable function $f : X\to\mathbb{R}$, there exists a weak 0-approximation $x\in W_0(\sigma,f)$ with the following property: there is a coordinate $n$ and a Borel probability measure $\tau_n$ on $X_n$ such that  \[\E_{x_{1} \otimes \cdots \otimes x_{n-1} \otimes \tau_{n} \otimes x_{n+1} \otimes x_{n+2} \otimes \cdots}[f]\,=\,\E_\sigma[f].\] That is, we only need to replace $x$ in a single coordinate by a Borel probability measure.
\end{rem}

\begin{thm}\label{thm.approx}
For each product measure  $\sigma \in \Sigma$, each bounded Borel-measurable function $f : X \to \R$, and each $\epsilon > 0$, the set $S_{\epsilon}(\sigma,f)$ has $\sigma$-measure $1$.  
\end{thm}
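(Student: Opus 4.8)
The plan is to recognize the relevant quantity as a reverse (backward) martingale and to combine the reverse martingale convergence theorem with Kolmogorov's zero--one law. For each $n \in \N$ and $x \in X$ set
\[
g_{n}(x) \,=\, \E_{\sigma_{1} \otimes \cdots \otimes \sigma_{n-1} \otimes x_{n} \otimes x_{n+1} \otimes \cdots}[f],
\]
and write $r = \E_{\sigma}[f]$. By definition, $x \in S_{\epsilon}(\sigma,f)$ precisely when $|g_{n}(x) - r| \leq \epsilon$ for some $n$, and $g_{n}(x)$ depends only on the coordinates $x_{n}, x_{n+1}, \ldots$.

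First I would identify $g_{n}$ with a conditional expectation. Let $\mathcal{T}_{n}$ be the sub-sigma-algebra of the Borel sigma-algebra of $X$ generated by the coordinate projections $\pi_{n}, \pi_{n+1}, \ldots$. Using Fubini's theorem and the product structure of $\sigma$, one checks that $\E_{\sigma}[f \cdot h] = \E_{\sigma}[g_{n} \cdot h]$ for every bounded $\mathcal{T}_{n}$-measurable $h$; since $g_{n}$ is itself $\mathcal{T}_{n}$-measurable (its measurability being exactly the Aliprantis--Border fact already invoked in Lemma~\ref{lemAn}), this gives $g_{n} = \E_{\sigma}[f \mid \mathcal{T}_{n}]$ $\sigma$-almost surely. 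The sigma-algebras satisfy $\mathcal{T}_{1} \supseteq \mathcal{T}_{2} \supseteq \cdots$, so by the tower property $(g_{n})_{n}$ is a reverse martingale with respect to this decreasing filtration.

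Because $f$ is bounded, the reverse martingale convergence theorem then yields $g_{n} \to \E_{\sigma}[f \mid \mathcal{T}_{\infty}]$ $\sigma$-almost surely, where $\mathcal{T}_{\infty} = \cap_{n} \mathcal{T}_{n}$ is the tail sigma-algebra. Since $\sigma$ is a product measure, Kolmogorov's zero--one law (Lemma~\ref{lemKolg}) makes $\mathcal{T}_{\infty}$ $\sigma$-trivial, so the limit is $\sigma$-a.s.\ constant and equal to $\E_{\sigma}[f] = r$. Hence $g_{n}(x) \to r$ for $\sigma$-almost every $x$. For any such $x$ and any $\epsilon > 0$ there is an $n$ with $|g_{n}(x) - r| \leq \epsilon$, i.e.\ $x \in S_{\epsilon}(\sigma,f)$; in fact $x$ belongs to $S_{\epsilon}(\sigma,f)$ for every $\epsilon > 0$ simultaneously. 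Thus $S_{\epsilon}(\sigma,f)$ contains a set of $\sigma$-measure $1$, and being $\sigma$-measurable by Lemma~\ref{lemAn}, it has $\sigma$-measure $1$.

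The conceptual heart is spotting the reverse martingale structure, after which the argument is a direct application of two standard theorems. The one genuinely technical point, and the step I would treat most carefully, is the identification $g_{n} = \E_{\sigma}[f \mid \mathcal{T}_{n}]$: one must verify that integrating out the first $n-1$ coordinates against $\sigma$ produces a bona fide version of the conditional expectation, which rests on Fubini together with the independence of coordinates and the measurability established earlier. I also note why this argument delivers only $\epsilon$-approximation and not a strong $0$-approximation: the sequence $g_{n}$ converges to $r$ but need not attain it, in contrast with the weak case of Theorem~\ref{thm.exact}, where a non-Dirac measure in a single coordinate can hit $r$ exactly.
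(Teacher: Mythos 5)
Your proposal is correct and takes essentially the same route as the paper's own proof: the paper likewise sets $g_{n} = \E_{\sigma}[f \mid \G_{n}]$ for the decreasing filtration generated by the coordinates $x_{k}$, $k \geq n$, and combines reverse-martingale convergence (citing Durrett) with Kolmogorov's zero--one law to conclude $g_{n} \to \E_{\sigma}[f]$ $\sigma$-almost surely, whence \Eqref{eqn.apprx} holds for large $n$. The only difference is that you explicitly verify, via Fubini, the identification of the conditional expectation with the integral $\E_{\sigma_{1} \otimes \cdots \otimes \sigma_{n-1} \otimes x_{n} \otimes x_{n+1} \otimes \cdots}[f]$ --- a step the paper leaves implicit, and a worthwhile point of care.
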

\begin{proof}
Let $\G_{n}$ be the sigma-algebra on $X$ generated by the coordinate functions $x_{k}$, $k \geq n$. Let $\G = \bigcap_{n \in \mathbb{N}}\G_{n}$. This is the tail sigma-algebra on $X$. Consider $g_{n} = \E_{\sigma}[f|\G_{n}]$. 

The process $\{g_{n}\}_{n \in \mathbb{N}}$ is a reverse martingale with respect to the sequence  $\{\G_{n}\}_{n \in \mathbb{N}}$ and the measure $\sigma$. 
That is, $\E_{\sigma}[g_{n} \mid \G_{n+1}] = g_{n+1}$. 
By Durrett \cite[Theorems 5.6.1 and 5.6.2]{Durrett11}, the sequence $g_{n}$ converges to a $\G$-measurable limit $g_{\infty}$, almost surely with respect to $\sigma$, and $\E_{\sigma}[g_{\infty}] = \E_{\sigma}[g_0] = \E_{\sigma}[f]$. Since $\sigma$ is a product measure, Kolmogorov's 0-1 law implies that $g_{\infty}$ is constant $\sigma$-almost surely. Hence $\sigma$-almost surely, $g_{n}$ converges to $\E_{\sigma}[f]$. This means that $\sigma$-almost surely, \Eqref{eqn.apprx} holds for large $n \in \N$, as desired.
\end{proof}

As the following example shows, there are product measures $\sigma\in\Sigma$ and bounded Borel-measurable functions $f$ such that there is no strong $0$-approximation of the expectation of $f$ under $\sigma$, i.e., $S_{0}(\sigma,f)=\emptyset$.

\begin{exl}\label{exno0}\rm
Let $X_{i} = \{0,1\}$ for each $i \in \N$, and let $f(x)$ be 1 if $x_{i} = 1$ for each $i \in \N$ and $0$ otherwise. We identify $\sigma_{i} \in \Sigma_{i}$ with $\sigma_{i}(\{1\})$, the probability of 1. Take any $\sigma \in \Sigma$ such that (a) $0 < \E_{\sigma}[f]$ and (b) there are infinitely many $i \in \N$ such that $\sigma_{i} < 1$ (for example $\sigma_{i} = 1 - 2^{-i}$ for each $i \in \N$). Then, $S_{0}(\sigma,f)=\emptyset$. Indeed, consider an $x \in X$ and an $n \in \N$. If $x_{i} = 0$ for some $i \geq n$, then $\E_{\sigma_{1} \otimes \cdots \otimes \sigma_{n-1} \otimes x_{n} \otimes x_{n+1} \otimes \cdots}[f] = 0$. If, on the other hand, $x_{i} = 1$ for each $i \geq n$, then 
\[\E_{\sigma}[f] = \prod_{i = 1}^{\infty}\sigma_{i} < \prod_{i = 1}^{n-1}\sigma_{i} = \E_{\sigma_{1} \otimes \cdots \otimes \sigma_{n-1} \otimes x_{n} \otimes x_{n+1} \otimes \cdots}[f].\]
 \end{exl}
\section{Applications in Decision Theory and Game  Theory}
\label{section:application}

In this section, we provide two applications of our results: one in Game Theory and one in Decision Theory.\medskip

\noindent\textbf{Application in Game Theory.} We apply
Theorem~\ref{thm.approx} on strong $\ep$-approximations of expectations 
to prove a result on the minmax values of the players in strategic-form games with an infinite set of players.

We consider a strategic-form game in which the set of players is $\{0\}\cup\N$. Player $0$'s action set is a nonempty finite set $A$, whereas the action set of each player $i \in \N$ is a Polish space $X_{i}$. As before, $X=\times_{i\in\N}X_i$. 
Player $0$'s payoff function is $u : A \times X \to \R$, a bounded Borel-measurable function. 
As before, $\Sigma$ denotes the set of product probability measures on $X$, which is the set of strategy profiles of player $0$'s opponents. Let $\Sigma^{*} \subseteq \Sigma$ denote the set of strategy profiles $\sigma = \otimes_{i \in \N} \sigma_{i}$ of player $0$'s opponents such that $\sigma_{i}$ is pure (i.e., a Dirac measure on $X_{i}$) for all but finitely many $i \in \N$.

The \textit{minmax value} and the \textit{finitistic minmax value} of player $0$ are defined respectively as
\begin{align*}
v =& \inf_{\sigma \in \Sigma} \max_{a \in A} \E_{a \otimes \sigma}[u],\\
v^{*} =& \inf_{\sigma \in \Sigma^{*}} \max_{a \in A} \E_{a \otimes \sigma}[u].
\end{align*}

Intuitively, the minmax value of player 0 is the maximal expected payoff that player 0 can guarantee to obtain, when all other players try to lower her payoff. The minmax value is a fundamental concept, and the expected payoff of a player in a Nash equilibrium can never be below her minmax value. 

The finitistic minmax value is a variant where only finitely many opponents can randomize over their actions. As mention in the introduction, the concept of the finitistic minmax value played a crucial role in Ashkenazi-Golan, Flesch, Predtetchinski and Solan \cite{Ashkenazi-etal} in their study of games with infinitely many players.

It follows directly from the definitions that $v \leq v^{*}$. By using Theorem~\ref{thm.approx}, it turns out, perhaps surprisingly, that in fact equality holds. 

\begin{thm}\label{minmax-oneshot} 
$v = v^{*}$. 
\end{thm}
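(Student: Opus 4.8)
The plan is to prove the two inequalities $v \le v^{*}$ and $v^{*} \le v$ separately. The first is immediate: since $\Sigma^{*} \subseteq \Sigma$, the infimum defining $v$ ranges over a larger set than the one defining $v^{*}$, so $v \le v^{*}$. All the work lies in the reverse inequality $v^{*} \le v$.

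To establish $v^{*} \le v$, I would show that for every $\sigma \in \Sigma$ and every $\epsilon > 0$ there is a profile $\tau \in \Sigma^{*}$ with
\[
\max_{a\in A}\E_{a\otimes\tau}[u] \;\le\; \max_{a\in A}\E_{a\otimes\sigma}[u] + \epsilon.
\]
Granting this, we get $v^{*} \le \max_{a}\E_{a\otimes\tau}[u] \le \max_{a}\E_{a\otimes\sigma}[u] + \epsilon$; taking the infimum over $\sigma\in\Sigma$ and then letting $\epsilon\downarrow 0$ yields $v^{*} \le v$, and hence $v = v^{*}$.

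For the construction of $\tau$, fix $\sigma$ and $\epsilon$. For each action $a\in A$ define the bounded Borel-measurable function $f_{a} : X \to \R$ by $f_{a}(x) = u(a,x)$, and apply Theorem~\ref{thm.approx} to each $f_{a}$. The crucial point, visible from the reverse-martingale argument in the proof of Theorem~\ref{thm.approx}, is that $\sigma$-almost every $x$ satisfies \Eqref{eqn.apprx} not merely for some $n$, but for all sufficiently large $n$: indeed $g_{n}^{a} := \E_{\sigma}[f_{a} \mid \G_{n}]$ converges $\sigma$-a.s.\ to the constant $\E_{\sigma}[f_{a}]$, and $g_{n}^{a}(x)$ equals the expectation $\E_{\sigma_{1}\otimes\cdots\otimes\sigma_{n-1}\otimes x_{n}\otimes x_{n+1}\otimes\cdots}[f_{a}]$. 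Since $A$ is finite, the set of $x$ for which $g_{n}^{a}(x)\to \E_{\sigma}[f_{a}]$ simultaneously for every $a\in A$ still has $\sigma$-measure $1$, hence is nonempty, and I would fix such a point $x$. I then choose $n$ large enough that $|g_{n}^{a}(x) - \E_{\sigma}[f_{a}]| \le \epsilon$ holds for all $a\in A$ at once, which is possible because only finitely many thresholds (one per action) must be exceeded and each condition holds for all large $n$. Setting $\tau := \sigma_{1}\otimes\cdots\otimes\sigma_{n-1}\otimes x_{n}\otimes x_{n+1}\otimes\cdots \in \Sigma^{*}$, we have $\E_{a\otimes\tau}[u] = g_{n}^{a}(x)$ and $\E_{a\otimes\sigma}[u] = \E_{\sigma}[f_{a}]$, so $|\E_{a\otimes\tau}[u] - \E_{a\otimes\sigma}[u]| \le \epsilon$ for every $a$, which yields the displayed max-inequality.

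I expect the main obstacle to be precisely the need for a single $\tau$ (equivalently, a single cut-off $n$) that works for all actions at once: applying Theorem~\ref{thm.approx} separately to each $f_{a}$ produces approximating measures with different cut-offs, and these cannot in general be merged into one element of $\Sigma^{*}$. The resolution is to exploit that the reverse-martingale convergence delivers the approximation for \emph{all} large $n$ rather than for a single $n$, so that the finitely many action-specific thresholds can be dominated by a common $n$; the finiteness of $A$ is what makes this step work.
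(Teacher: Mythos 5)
Your proof is correct and follows essentially the same route as the paper: apply Theorem~\ref{thm.approx} to the finitely many functions $f_{a}$, take a point in the intersection of the corresponding $\sigma$-measure-one sets, and truncate $\sigma$ at a single common coordinate $n$ to obtain $\tau\in\Sigma^{*}$. Your explicit observation that the reverse-martingale convergence gives \Eqref{eqn.apprx} for \emph{all} sufficiently large $n$ (not just some $n$) is precisely the justification implicit in the paper's phrase ``using once again the finiteness of $A$'' for the existence of a common cut-off, so your write-up makes that step more careful than the paper's.
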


\begin{proof}
Since $\Sigma^* \subseteq \Sigma$, 
we always have $v \leq v^*$.
To prove Theorem~\ref{minmax-oneshot} we therefore need to show that $v^* \leq v$.
Fix $\epsilon > 0$. We show that $v^{*} \leq v + 2\epsilon$.

For each $a \in A$, let $f_{a} = u(a,\cdot): X \to \R$. Let $\sigma \in \Sigma$ be the $\epsilon$-minmax strategy profile of player $0$'s opponents; 
that is, a strategy profile such that $ \E_{a \otimes \sigma}[u] = \E_{\sigma}[f_{a}]\leq v + \epsilon$ for each $a \in A$. 

Take any $x \in \bigcap_{a \in A}S_\epsilon(f_{a},\sigma)$, which is nonempty by Theorem~\ref{thm.approx} and since $A$ is finite. 
Using once again the finiteness of $A$, 
there exists $n \in \N$ such that \Eqref{eqn.apprx} holds for each of the functions $f_{a}$, and thus $\E_{\sigma_{1} \otimes \cdots \otimes \sigma_{n-1} \otimes x_{n} \otimes x_{n+1} \otimes \cdots}[f_a] \,\leq\, \E_{\sigma}[f_a] +  \epsilon$ for each $a\in A$.
This implies that the strategy profile $\tau=\sigma_{1} \otimes \cdots \otimes \sigma_{n-1} \otimes x_{n} \otimes x_{n+1} \otimes \cdots \in \Sigma^{*}$ of player $0$'s opponents is a $2\epsilon$-minmax strategy profile: $ \E_{a \otimes \tau}[u] = \E_{\tau}[f_{a}]\leq \E_{\sigma}[f_a] +  \epsilon\leq v + 2\epsilon$ for each $a \in A$. Therefore, $v^{*} \leq v + 2\epsilon$, as desired. 
\end{proof}

As the following example shows, 
the statement of Theorem~\ref{minmax-oneshot} is not true if player $0$'s action set $A$ is infinite. 

\begin{exl}\rm 
Suppose that $A = \N \times \{0,1\}$ and $X_{i} = \{0,1\}$ for each $i \in \N$. Define player $0$'s payoff function as follows: for $(n,j) \in A$ and $x \in X$, let $u((n,j),x)$ be $1$ if $j = x_{n}$ and $0$ otherwise. Intuitively, player 0's action $(n,j)$ can be thought of naming an opponent $n$ and an action $j$, and if opponent $n$'s action is indeed $j$, then player 0 obtains payoff 1. 

Then $v = \tfrac{1}{2}$, which follows by noticing that if $\sigma_{i}$ places equal probability on $0$ and $1$ for each $i \in \N$ then $\E_{(n,j) \otimes \sigma}[u]=\tfrac{1}{2}$ for each $(n,j)\in A$. On the other hand, $v^*=1$, as in any strategy profile $\sigma\in\Sigma^*$ there is a player $n\in\mathbb{N}$ who places probability 1 on one of his actions $j\in\{0,1\}$, and then $\E_{(n,j) \otimes \sigma}[u]=1$.
\end{exl}

\noindent\textbf{Application in Decision Theory.} We apply Theorem \ref{thm.exact} to the the following class of decision problems on the infinite horizon: At each stage $t\in\N$, a decision maker chooses an action $x_t$ from a Polish space $X_t$. If the sequence of chosen actions is $(x_1,x_2,\ldots)\in X=\times_{t\in\N}X_t$, then her payoff is $u(x_1,x_2,\ldots)$, where $u : X \to \R$ is a bounded Borel-measurable function. 

A \emph{Markov strategy} is an element of $\Sigma$, which is, as before, the set of product probability measures on $X$. Intuitively, a Markov strategy $\sigma=(\sigma_t)_{t\in\N}\in\Sigma$ chooses an action at each stage $t$ only depending on the current stage $t$, and thus not depending on the past actions. Such a strategy is called \emph{eventually pure}, if $\sigma_t$ is a Dirac measure for all but finitely many stages $t\in \N$.

It follows from Theorem \ref{thm.exact} that the set of expected payoffs that the decision maker can obtain by Markov strategies is equal to the set of expected payoffs under eventually pure Markov strategies. In other words, randomizing infinitely many times in a Markov strategy does not give additional expected payoffs to the decision maker.\medskip

\section{Concluding Remarks}\label{ConcRem} 
We do not know whether Theorem \ref{thm.exact} could be deduced from Theorem  \ref{thm.approx} or vice versa. The two results appear to be logically unrelated, especially as the proofs use different approaches.

We conclude with an open problem: Can one generalize Theorem \ref{thm.exact} to vector-valued (rather than just real-valued) functions? Consider a bounded Borel-measurable function $f : X \to \R^{d}$, and a product measure $\sigma \in \Sigma$. Is there a point $x \in X$, an $n \in \N$, and probability measures $\tau_{i} \in \Sigma_{i}$ for $i < n$, such that 
\[\E_{\tau_{1} \otimes \cdots \otimes \tau_{n-1} \otimes x_{n} \otimes x_{n+1} \otimes \cdots}[f_{i}] = \E_{\sigma}[f_{i}]\]
for each $i \in \{1,\ldots,d\}$? Theorem \ref{thm.exact} gives the affirmative answer when $d = 1$. The answer is also a ``yes" when $f$ is a tail function (i.e., when $f$ is measurable with respect to the tail sigma-algebra on $X$). Beyond these special cases, the problem is open. In particular, it is also open if $f$ is continuous. 

\section*{Appendix}
We have applied Kolmogorov's zero-one law to analytic and co-analytic sets. Here we justify this application.

\begin{lemma}\label{lemKolg}
If $U$ is a universally measurable tail subset of $X$ and $\sigma \in \Sigma$, then the $\sigma$-measure of $U$ can only have the values 0 and 1.  
\end{lemma}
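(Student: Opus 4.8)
The plan is to re-run the proof of Kolmogorov's zero--one law, taking care throughout that $U$ is only universally measurable rather than Borel. Write $p=\sigma(U)$; the goal is to show $p\in\{0,1\}$. The classical argument proceeds by showing that a tail event is independent of every finite initial segment of coordinates, hence of the whole Borel $\sigma$-algebra, and finally of itself, which forces $p=p^{2}$. I would reproduce exactly this chain, inserting the extra measurability checks that are needed because $U$ lives only in the $\sigma$-completion of the Borel sets.

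First I would fix $k\in\N$ and exploit that $U$ is a tail set in the combinatorial sense: since $U$ is a union of classes of $\sim$, membership of a point in $U$ is unaffected by altering its first $k$ coordinates. Writing $X=Y_{k}\times Z_{k}$ with $Y_{k}=X_{1}\times\cdots\times X_{k}$ and $Z_{k}=\times_{i>k}X_{i}$, this invariance says $U=Y_{k}\times V_{k}$ for a single section $V_{k}\subseteq Z_{k}$. Because $\sigma=\sigma_{Y_{k}}\otimes\sigma_{Z_{k}}$ is a product measure and $U$ is $\sigma$-measurable, Fubini's theorem for the completion of the product measure applies: for $\sigma_{Y_{k}}$-almost every $a\in Y_{k}$ the section $U_{a}$ is $\sigma_{Z_{k}}$-measurable, and $\sigma(U)=\int_{Y_{k}}\sigma_{Z_{k}}(U_{a})\,d\sigma_{Y_{k}}(a)$. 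Since all sections coincide with $V_{k}$, this shows $V_{k}$ is $\sigma_{Z_{k}}$-measurable and $\sigma(U)=\sigma_{Z_{k}}(V_{k})$. Applying the same computation to $U\cap F$ for $F\in\mathcal{A}_{k}$, the $\sigma$-algebra generated by the first $k$ coordinates (so that $F=F'\times Z_{k}$), yields $\sigma(U\cap F)=\sigma(U)\,\sigma(F)$; that is, $U$ is independent of $\mathcal{A}_{k}$.

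Next I would upgrade this to independence of $U$ from the full Borel $\sigma$-algebra $\mathcal{B}(X)$. The union $\bigcup_{k}\mathcal{A}_{k}$ is a $\pi$-system generating $\mathcal{B}(X)$, and the two finite Borel measures $F\mapsto\sigma(U\cap F)$ and $F\mapsto\sigma(U)\,\sigma(F)$ agree on it; Dynkin's $\pi$--$\lambda$ theorem then gives agreement on all of $\mathcal{B}(X)$. Finally, since $U$ is $\sigma$-measurable I would pick a Borel set $B$ with $\sigma(U\,\triangle\,B)=0$; independence applied to $F=B$ gives $\sigma(U\cap B)=\sigma(U)\,\sigma(B)$, while $\sigma(U\cap B)=\sigma(U)=\sigma(B)=p$, whence $p=p^{2}$ and therefore $p\in\{0,1\}$.

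The step I expect to be the main obstacle is the measurability bookkeeping around the product structure: justifying that the \emph{completed} Fubini theorem (not merely Fubini for Borel functions) applies to the universally measurable set $U$, so that its sections are $\sigma_{Z_{k}}$-measurable and the product identity $\sigma(U)=\sigma_{Z_{k}}(V_{k})$ holds. This rests on the fact that a universally measurable set is measurable for the completion of the Borel product measure, and that this completion coincides with the completion of the product of the completed marginals. Everything else --- the $\pi$--$\lambda$ extension, the Borel approximation, and the $p=p^{2}$ conclusion --- is routine once independence from each $\mathcal{A}_{k}$ is in hand; the only genuine departure from the classical law is that $U$ need not be Borel, which is precisely what the completion/Fubini argument is designed to absorb.
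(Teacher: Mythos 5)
Your proof is correct, and it reaches the conclusion by a genuinely different route than the paper. The paper also starts from a Borel set $B$ with $\sigma(B \triangle U) = 0$, but then follows Shiryaev's original scheme: it quotes an approximation result (Shiryaev, Problem 8, p.\,69) to produce cylinder sets $B_{n} \in \mathscr{F}_{n}$ with $\sigma(B_{n} \triangle B) \to 0$, asserts in one line that each $B_{n}$ is independent of $U$ (``since $\sigma$ is a product measure and $U$ is tail''), and passes to the limit in $\sigma(B_{n})\cdot\sigma(U) = \sigma(B_{n} \cap U)$ to obtain $\sigma(U) = \sigma(U)^{2}$. You instead establish \emph{exact} independence of $U$ from each finite-coordinate sigma-algebra $\mathcal{A}_{k}$ via the completed Fubini theorem --- using that the combinatorial tail property forces all $Y_{k}$-sections of $U$ to coincide with a single set $V_{k}$, whence $\sigma(U \cap F) = \sigma_{Y_{k}}(F')\,\sigma_{Z_{k}}(V_{k}) = \sigma(U)\,\sigma(F)$ --- then upgrade to all of $\mathcal{B}(X)$ by the $\pi$--$\lambda$ theorem and simply evaluate at $F = B$, so no approximating sequence or limiting argument is needed. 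Your route is somewhat longer but more self-contained on the one delicate point: the completed-Fubini computation is precisely the justification of the independence step that the paper states without proof, and that step genuinely needs such an argument, since $U$ is only universally measurable, so the independence of the Borel cylinder $B_{n}$ from $U$ is not an instance of the classical factorization of a product measure over Borel cylinder sets. What the paper's route buys is brevity, by outsourcing the approximation step to a standard reference and leaving the independence claim implicit; what yours buys is that the transport of the product structure through the completion (universal measurability of $U$ giving membership in the completion of $\sigma_{Y_{k}} \otimes \sigma_{Z_{k}}$, measurability of the common section $V_{k}$, and the identity $\sigma(U)=\sigma_{Z_{k}}(V_{k})$) is made fully explicit, with the $\pi$--$\lambda$ step replacing the limit by an exact identity valid for every Borel $F$.
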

\begin{proof}
We adapt the proof of the Kolmogorov's zero-one law in Shiryaev \cite[p.381]{Shiryaev96}.

For $n \in \N$ let $\mathscr{F}_{n}$ be the sigma-algebra on $X$ generated by the first $n$ coordinate functions $x_{1},\ldots,x_{n}$. 
The Borel sigma-algebra $\mathscr{F}$ on $X$ is the sigma-algebra generated by the set $\cup_{n \in \N}\mathscr{F}_{n}$. As $U$ is universally measurable, there exists a Borel set $B$ of $X$ such that $\sigma(B \triangle U) = 0$, where $\triangle$ denotes the symmetric difference. 

By Shiryaev \cite[Problem 8, p.69]{Shiryaev96},
for every $n \in \N$ there exists a set $B_{n} \in \mathscr{F}_{n}$ such that $\sigma(B_{n} \triangle B) \to 0$. Since $\sigma$ is a product measure and $U$ is tail, $B_{n}$ and $U$ are independent. 
Therefore,
\[\sigma(B_{n}) \to \sigma(B) = \sigma(U)\ \,\text{ and}\]
\[\sigma(B_{n}) \cdot \sigma(U) = \sigma(B_{n} \cap U) = \sigma(B_{n} \cap B) \to \sigma(B) = \sigma(U).\]  
It follows that $\sigma(U) = \sigma(U) \cdot \sigma(U)$, implying the result.
\end{proof}

\end{document}